\newtheorem{main}{Theorem}
\newtheorem{thm}{Theorem}
\newtheorem{lem}[thm]{Lemma}
\newtheorem{thm*}{Theorem}
\theoremstyle{definition}
\newtheorem{fact}[thm]{Fact}
\theoremstyle{remark}
\numberwithin{equation}{section}
\newcommand{\Z}{\mathbb{Z}}
\newcommand{\R}{\mathbb{R}}
\newcommand{\C}{\mathbb{C}}
\newcommand{\N}{\mathbb{N}}
\newcommand{\cU}{\mathcal{U}}
\renewcommand{\phi}{\varphi}
\newcommand{\norm}[1]{\left\lVert #1 \right\rVert}
\DeclareMathOperator{\Haar}{Haar}
\begin{document}

\title{The unitary group of a $\mathrm{II}_1$ factor is SOT-contractible}
\author{David Jekel}

\address{\parbox{\linewidth}{Department of
		Mathematical Sciences,
		University of Copenhagen \\
		Universitetsparken 5, 2100 Copenhagen \O, Denmark}}
\email{daj@math.ku.dk}
\urladdr{http://davidjekel.com}

\begin{abstract}
We show that the unitary group of any SOT-separable $\mathrm{II}_1$ factor $M$, with the strong operator topology, is contractible.  Combined with several old results, this implies that the same is true for any SOT-separable von Neumann algebra with no type $\mathrm{I}_n$ direct summands ($n < \infty$).  The proof for the $\mathrm{II}_1$-factor case uses regularization via free convolution and Popa's theorem on the existence of approximately free Haar unitaries in $\mathrm{II}_1$ factors.
\end{abstract}

\maketitle

\section{Introduction}

It is natural and old question to determine the homotopy type of the unitary group of a von Neumann algebra.  Dixmier and Douady showed that for a separable infinite-dimensional Hilbert space $H$, the unitary group of $B(H)$ is contractible with respect to the strong operator topology (SOT) \cite{DixDou1963}.  Two years later Kuiper showed it was contractible even with respect to the operator norm topology \cite{Kuiper1965}.  The same is true for any properly infinite von Neumann algebra thanks to \cite{Breuer1967,BrWi1976}.  The hardest case turns out to be that of $\mathrm{II}_1$ factors, for which the unitary group with respect to operator norm has nontrivial $\pi_1$, and hence is not contractible \cite{ASS1971}.

However, when we use the strong operator topology (SOT), Popa and Takesaki \cite{PoTa1993} used Tomita--Takesaki theory to show that the unitary group is contractible for any $\mathrm{II}_1$ factor that is \emph{McDuff}, or stable under tensorization by the hyperfinite $\mathrm{II}_1$ factor $\mathcal{R}$.  Ozawa \cite{Ozawa2024} recently showed that the group of approximately inner automorphisms is also contractible in this setting.  This paper will show SOT-contractibility of the unitary group for all SOT-separable $\mathrm{II}_1$ factors.

\begin{main} \label{thm: contractible}
For any SOT-separable $\mathrm{II}_1$ factor $M$, the unitary group $U(M)$ is SOT-contractible.
\end{main}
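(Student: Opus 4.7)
The plan is to construct an SOT-continuous homotopy $H \colon [0,1] \times U(M) \to U(M)$ with $H(0,u) = u$ and $H(1,u) = 1$ in two phases: a free-probabilistic regularization sending each $u \in U(M)$ to a Haar unitary, followed by a spectral contraction of Haar unitaries to the identity.

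For the regularization phase on $s \in [0, 1/2]$, the controlling free-probabilistic fact is that Haar measure absorbs free multiplicative convolution: if $u, v \in M$ are free unitaries and $v$ is Haar, then $uv$ is Haar. More generally, if $(g_t)_{t \geq 0}$ is a free unitary Brownian motion (Biane's process) realized inside $M$ so that each $g_t$ is free from $u$, then $u g_t$ has spectral distribution $\mu_u \boxtimes \nu_t$, which continuously interpolates from $\mu_u$ at $t = 0$ to Haar measure as $t \to \infty$. Popa's theorem on approximately free Haar unitaries in a $\mathrm{II}_1$ factor provides the input to realize such a $(g_t)$ inside $M$; composed with a reparametrization $T \colon [0, 1/2] \to [0, \infty]$, this yields $H_1(s, u) := u \cdot g_{T(s)}$, ending at a Haar unitary $w_u := u g_\infty$.

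For the contraction phase on $s \in [1/2, 1]$, write each Haar unitary $w$ as $w = \exp(i h_w)$ with $h_w$ self-adjoint of spectrum in $(-\pi, \pi]$ (the branch point $-1$ has Haar-measure zero, so $h_w$ is well defined by measurable functional calculus), and take the standard path
\[
H_2(s, u) \;:=\; \exp\!\bigl( i (2 - 2s)\, h_{w_u} \bigr), \qquad s \in [1/2, 1].
\]
The key observation is that $w \mapsto h_w$ is SOT-continuous on the SOT-closed set of Haar unitaries in $U(M)$: an $\varepsilon/3$ argument using uniform approximation of the principal logarithm in $L^2(\mathbb{T}, \mathrm{Haar})$ by continuous functions shows that $w_n \to w$ in SOT (all Haar) implies $h_{w_n} \to h_w$ in $\|\cdot\|_2$, and composing with the SOT-continuous functional calculus on norm-bounded self-adjoints yields SOT-continuity of $H_2$ in $(s, u)$.

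The principal obstacle is to make the regularization phase genuinely SOT-continuous jointly in $(s, u)$ with an endpoint that is exactly Haar: Popa's theorem only produces Haar unitaries \emph{approximately} free from a given finite subset, and no nontrivial element of $M$ can be free from all of $M$ itself (since free-from-$M$-within-$M$ would force the element into $Z(M) = \mathbb{C}$). The natural remedy is to build $(g_t)$ in the ultrapower $M^\omega$, where exact freeness from $M$ is available, and descend the construction back to $M$ via a diagonal/selection argument exploiting the SOT-separability of $M$. Organizing this descent so that the resulting homotopy is jointly SOT-continuous in $(s, u)$ rather than merely pointwise defined is where the bulk of the technical work will lie.
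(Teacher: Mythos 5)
Your high-level plan matches the paper's: pull back a free Haar path from $M^{\cU}$ via Popa's theorem, descend to $M$ by a diagonal argument over an SOT-dense sequence in $U(M)$, then contract toward $1$. But the clean two-phase separation you propose has a genuine structural gap, and it is exactly the obstacle you flagged, though you frame it as a technical issue of ``organizing the descent'' when in fact the phase boundary itself must go.

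The descent from $M^{\cU}$ to $M$ cannot terminate at a finite time with an exactly Haar unitary $w_u$, and in fact cannot terminate at all. What it produces is a path $h(t,u)$, $t \in [0,\infty)$, in $U(M)$ whose spectral distribution $\mu_{h(t,u)}$ converges to $\mu_{\Haar}$ in Wasserstein distance, but which does \emph{not} converge in $\norm{\cdot}_2$: on each interval $[m,m+1]$ one multiplies by a fresh almost-Haar unitary $e^{\pi i s x_{n(m)}}$, and the increment $\norm{h(m+1,u) - h(m,u)}_2 = \norm{e^{\pi i x_{n(m)}} - 1}_2$ is bounded below near $\sqrt{2}$ (since $e^{\pi i x_{n(m)}}$ is nearly Haar). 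Hence $w_u := \lim_{t\to\infty} h(t,u)$ does not exist, and your $H_2$ has no input. Your $\varepsilon/3$ argument that $w \mapsto h_w = -i\log w$ is $\norm{\cdot}_2$-continuous is correct, but only on the set of \emph{exactly} Haar unitaries: the cancellation of the first and third terms uses $\mu_{w_n} = \mu_w = \mu_{\Haar}$ verbatim. The principal logarithm is discontinuous at $-1$ and therefore admits no modulus of continuity in terms of $d_W(\mu_u,\mu_{\Haar})$, so the map does not extend to almost-Haar unitaries in any controlled way; in particular you cannot simply feed $h(t,u)$ for large $t$ into it.

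The paper's resolution is to abandon the two-phase structure and run both deformations simultaneously on a shared parameter $t \in [0,\infty]$. The crucial point is to replace the principal logarithm by a \emph{family} of globally Lipschitz contractions: Lemma~\ref{lem: functional deformation} uses $g_t(e^{2\pi i x}) = e^{2\pi i x|2x|^t}$ on $x\in(-1/2,1/2]$, which is $(2t+1)$-Lipschitz on all of $S^1$ for every finite $t$, giving the quantitative bound
\[
\norm{g_t(u) - 1}_2 \leq (2t+1)\, d_W(\mu_u,\mu_{\Haar}) + \sqrt{\tfrac{\pi}{2(t+1)}}.
\]
One then sets $\tilde h(t,u) = g_{s(t,h(t,u))}(h(t,u))$ with a cutoff $s(t,u) = \min\bigl(d_W(\mu_u,\mu_{\Haar})^{-1/2}, t\bigr)$, which lets the exponent grow only slowly enough relative to how fast $d_W(\mu_{h(t,u)},\mu_{\Haar})$ decreases that both error terms vanish as $t \to \infty$, and the resulting map extends continuously to $[0,\infty] \times U(M)$ with $\tilde h(\infty,u) = 1$. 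It is this interleaving, together with the Lipschitz tolerance of $g_t$ to approximately-Haar inputs, that replaces your nonexistent phase boundary.
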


The proof has two main stages:  We first deform the unitaries to have spectral distribution close to the Haar measure on $S^1$, and then we deform them toward the identity by functional calculus.  For the first step, the idea is that if we multiply $u$ by a freely independent Haar unitary $v$ (which of course is SOT-homotopic to $1$), then $uv$ has the Haar distribution.  Using Popa's theorem on approximately free unitaries \cite{Popa1995free}, we can find a unitary $v$ freely independent from $M$ in the ultrapower $M^{\cU}$.  We then lift $v$ (and the path from $v$ to $1$) to unitaries in $M$ that provide better and better approximate freeness with each unitary $u_j$ in an SOT-dense sequence in $U(M)$.

Theorem \ref{thm: contractible} in turn allows us to determine the homotopy type with respect to SOT for the unitary group of any SOT-separable von Neumann algebra, via von Neumann's direct integral decomposition \cite{vonNeumann1949reduction}.  Indeed,
\[
M \cong M_0 \oplus \bigoplus_{i \in I} M_i,
\]
where $M_0$ has diffuse center and $M_i$ is a factor for $i \in I$.  It is easy to see that $U(M_0)$ is SOT-contractible.  Indeed, writing $Z(M_0) \cong L^\infty[0,1]$, consider the central projection $1_{[0,t]}$ in $L^\infty[0,1]$ which depends SOT-continuously on $t$, and define the homotopy
\[
h(t,u) = 1_{[0,t]} + 1_{(t,1]} u, \qquad t \in [0,1], u \in U(M_0).
\]
Moreover, by the previously mentioned results for properly infinite von Neumann algebras, the unitary groups of the type $\mathrm{I}_\infty$, $\mathrm{II}_\infty$ and $\mathrm{III}$ summands are contractible, and by Theorem \ref{thm: contractible}, the same holds for the type $\mathrm{II}_1$ summands. By taking the direct sum of all these homotopies, and the trivial homotopy on the type $\mathrm{I}_n$ summands, we obtain a homotopy from $U(M)$ to the unitary group of the type $\mathrm{I}_n$ ($n < \infty$) summands.  Thus, we have the following.

\begin{main}
For any SOT-separable von Neumann algebra $M$, the unitary group is SOT-homotopic to the unitary group of its atomic, type $\mathrm{I}_n$ ($n < \infty$) part.  In particular, the unitary group is SOT-contractible if and only if $M$ has no type $\mathrm{I}_n$ ($n < \infty$) direct summand.
\end{main}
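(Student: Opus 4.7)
The plan is to formalize the sketch indicated before the statement: decompose $M$ using its center, apply the known SOT-contractibility results (including Theorem~\ref{thm: contractible}) on each summand, and then glue the resulting homotopies into a single SOT-continuous homotopy on $U(M)$.

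First, I would start from the decomposition $M = M_0 \oplus \bigoplus_{i \in I} M_i$ recorded in the excerpt, with $M_0$ having diffuse center (so $Z(M_0) \cong L^\infty[0,1]$) and the $M_i$ factors; SOT-separability ensures $I$ is countable. On $M_0$, I use the explicit homotopy $h_0(t, u) = 1_{[0,t]} + 1_{(t,1]}u$ from the excerpt. On each factor $M_i$ of type $\mathrm{I}_\infty$, $\mathrm{II}_\infty$, or $\mathrm{III}$, I use the previously cited SOT-contractions for properly infinite von Neumann algebras (yielding $h_i$); on each type $\mathrm{II}_1$ factor I apply Theorem~\ref{thm: contractible}; and on each type $\mathrm{I}_n$ ($n < \infty$) factor (necessarily $M_n(\C)$) I use the trivial homotopy $h_i(t, u) = u$.

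The main technical step is combining these coordinatewise into a single SOT-continuous $h \colon [0,1] \times U(M) \to U(M)$. Let $p_0$ and $p_i$ denote the corresponding central projections and set
\[
h(t, u) = h_0(t, p_0 u) \oplus \bigoplus_i h_i(t, p_i u);
\]
unitarity on each summand is immediate. For SOT-continuity, suppose $(t_\alpha, u_\alpha) \to (t, u)$ in $[0,1] \times U(M)$. Then $p_i u_\alpha \to p_i u$ in SOT on each $U(M_i)$, so by continuity of $h_i$ we have $\|h_i(t_\alpha, p_i u_\alpha) \xi_i - h_i(t, p_i u) \xi_i\|^2 \to 0$ for each $\xi_i \in p_i H$. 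These terms are dominated by $4 \|\xi_i\|^2$, whose sum over $i$ is bounded by $4\|\xi\|^2 < \infty$, so a standard dominated-convergence argument (valid for nets via an $\varepsilon$-split into finitely many indices and a uniformly small tail) yields SOT-convergence of $h(t_\alpha, u_\alpha) \xi$ to $h(t, u) \xi$.

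The resulting $h$ deformation retracts $U(M)$ onto the unitary group of the atomic type $\mathrm{I}_n$ ($n < \infty$) part, which is a (finite or countable) product of matrix unitary groups. The ``in particular" clause then follows: if this product is trivial, $U(M)$ is contractible; conversely, the presence of such a factor summand makes $U(M)$ retract onto a space containing $U(n)$ as a factor, and $\pi_1(U(n)) = \Z$ obstructs contractibility. I expect the SOT-continuity check for the infinite direct sum to require the most care; the rest is a straightforward assembly of the cited contractibility results together with Theorem~\ref{thm: contractible}.
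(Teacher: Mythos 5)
Your proposal takes essentially the same route as the paper's argument, which appears in the paragraph immediately preceding the theorem statement: decompose $M$ into the diffuse-center summand plus a direct sum of factors, apply the explicit homotopy on the diffuse part, the known contractions on properly infinite factors, Theorem~\ref{thm: contractible} on the $\mathrm{II}_1$ factors, and the trivial homotopy on the finite type $\mathrm{I}$ factors, then take the direct sum. You supply two details the paper leaves implicit (the dominated-convergence check that the direct-sum homotopy is SOT-continuous, and the $\pi_1$-obstruction for the ``only if'' direction), but these are correct and in the spirit of the paper's sketch rather than a different approach.
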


\subsection*{Acknowledgements}

I thank Narutaka Ozawa for his lecture at the conference $\mathrm{C}^*$-algebras at the Mathematisches Forschungsinstitut Oberwolfach August 4-8, 2025, which explained the problem of contractibility of the unitary group while giving history and intuition.  I also thank Hannes Thiel for advertising the problem on his website.  I thank Sorin Popa, Hannes Thiel, Ben Hayes, and Mehdi Moradi for comments on the manuscript.  Funding for this work was provided by the Horizon Europe Marie Sk{\l}odowska Curie Action, FREEINFOGEOM, grant 101209517.

\section{Preliminaries}

We assume familiarity with tracial von Neumann algebras and $\mathrm{II}_1$ factors (see e.g.\ \cite{TakesakiI,JonesSunder1997}).  In particular, recall that on a tracial von Neumann algebra, the SOT is metrized by the $2$-norm $\norm{x}_2 = \tau(x^*x)^{1/2}$.  We will also use the ultrapower $M^{\cU}$ for a free ultrafilter $\cU$ on $\N$ (for background, see e.g.\ \cite[Appendix E]{BrownOzawa2008}).

We first recall some terminology for measures on the circle and the Wasserstein distance.  Let $S^1 = \{z \in \C: |z| = 1\}$.  If $u$ is a unitary in $M$, then by the spectral theorem, there is a unique probability measure $\mu_u \in \mathcal{P}(S^1)$ such that
\[
\tau(f(u)) = \int_{S^1} f(z)\,d\mu_u(z) \text{ for } f \in C(S^1),
\]
and by the Stone-{-Weierstrass theorem, $\mu_u$ is uniquely characterized by
\[
\tau(u^k) = \int_{S^1} z^k \,d\mu_u(z) \text{ for } k \in \Z.
\]
We call $\mu_u$ the spectral measure or distribution of $u$ (with respect to the trace).  We denote by $\mu_{\Haar}$ the Haar measure, or uniform probability measure, on $S^1$.

The space of probability measures $\mathcal{P}(S^1)$ will be equipped with the weak-$*$ topology.  We also recall the definition of the $L^2$ Wasserstein distance in the free setting as in \cite{BV2001}, but for the unitary case:
\[
d_W(\mu,\nu) = \inf \{\norm{u - v}_{L^2(M,\tau)}: \mu_u = \mu, \mu_v = \nu: (M,\tau) \text{ separable tracial von Neumann algebra} \}.
\]
Although the class of separable tracial von Neumann algebras is not a set, the above infimum can equivalently be expressed as one over a set (see e.g.\ \cite[Lemma 2.20]{GJNS2021}).  In fact, since a single unitary generates a commutative algebra, (the unitary analog of) \cite[Theorem 1.5]{BV2001} shows that the free Wasserstein distance for this case can always be witnessed by commutative algebras $(M,\tau)$ and so agrees with the classical Wasserstein distance.

\begin{fact}[See {e.g.~\cite[Theorem 6.9]{Villani2008}}] \label{fact: topologies on measures}
The weak-$*$ topology on $\mathcal{P}(S^1)$ agrees with the $d_W$-topology.
\end{fact}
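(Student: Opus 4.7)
This is a standard fact from optimal transport theory on a compact metric space (hence the citation to Villani, Theorem 6.9), and the plan is to prove the two directions of the equivalence separately. The key reduction is that, as the excerpt notes, $d_W$ on $\mathcal{P}(S^1)$ agrees with the classical $L^2$-Wasserstein distance given by $d_W(\mu,\nu)^2 = \inf_\pi \int_{S^1 \times S^1} |z-w|^2\,d\pi(z,w)$, where the infimum is over couplings $\pi$ of $\mu$ and $\nu$. Indeed, any such coupling can be realized by taking $(M,\tau) = L^\infty(S^1 \times S^1, \pi)$ with the two coordinate projections as unitaries, and conversely any pair of unitaries in a commutative tracial algebra produces such a coupling as their joint distribution.

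First I would show that $d_W$-convergence implies weak-$*$ convergence. Suppose $d_W(\mu_n,\mu) \to 0$ and fix $f \in C(S^1)$; since $S^1$ is compact, $f$ is uniformly continuous with some modulus $\omega$ satisfying $|f(z)-f(w)| \leq \omega(|z-w|)$. Choosing near-optimal couplings $\pi_n$ with $\int |z-w|^2\,d\pi_n \to 0$, we have
\[
\left| \int f\,d\mu_n - \int f\,d\mu \right| \leq \int \omega(|z-w|)\,d\pi_n(z,w),
\]
and the right-hand side tends to $0$ by dominated convergence, since $|z-w| \to 0$ in $L^2(\pi_n)$ and hence in $\pi_n$-probability, while $\omega$ is bounded.

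For the converse, suppose $\mu_n \to \mu$ weak-$*$. Fix $\epsilon > 0$ and partition $S^1$ into finitely many Borel arcs $A_1,\dots,A_k$ each of diameter at most $\epsilon$. By approximating indicators of arcs with continuous bump functions from above and below, weak-$*$ convergence gives $\sum_j |\mu_n(A_j) - \mu(A_j)| < \epsilon$ for all sufficiently large $n$. I would then construct a coupling $\pi_n$ that, on each $A_j$, matches $\min(\mu_n(A_j), \mu(A_j))$ mass within the arc (cost at most $\epsilon^2$ per unit mass) and couples the residual mass (total measure at most $\epsilon$) arbitrarily (cost at most $\operatorname{diam}(S^1)^2 = 4$ per unit mass), yielding $d_W(\mu_n,\mu)^2 \leq \epsilon^2 + 4\epsilon$ eventually.

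The main technical point, and the step requiring the most care, is the quantitative coupling construction in the second direction; the realization of the coupling by unitaries in a tracial von Neumann algebra is then automatic from the commutative reduction noted above. Alternatively, one could bypass the explicit construction by invoking compactness of $(\mathcal{P}(S^1), \text{weak-}*)$: since the first direction already shows the identity map from $(\mathcal{P}(S^1), d_W)$ to $(\mathcal{P}(S^1), \text{weak-}*)$ is continuous between Hausdorff spaces, and both sides are compact (the $d_W$-compactness following from tightness on $S^1$), the identity is a homeomorphism.
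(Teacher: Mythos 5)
The paper offers no proof of this Fact; it is stated with a citation to Villani. So there is no ``paper's own proof'' to compare against, and your task is simply to supply a correct argument, which you essentially do: the first direction via moduli of continuity plus a Chebyshev/truncation argument, the second direction via a direct coupling built on a fine partition of $S^1$. This is the standard route and matches what one finds in Villani.

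Two small points deserve care. First, in the second direction, weak-$*$ convergence does not by itself give $\mu_n(A_j) \to \mu(A_j)$ for arbitrary Borel arcs; by the Portmanteau theorem you need the arc endpoints to be $\mu$-null. Since $\mu$ has at most countably many atoms, you can always rotate the partition so that every endpoint is $\mu$-null, after which your sandwich of the indicators between continuous bump functions (equivalently, Portmanteau) goes through. This should be said explicitly rather than absorbed into ``approximating indicators.'' Second, your alternative compactness shortcut has a genuine gap: you assert that $(\mathcal{P}(S^1), d_W)$ is compact because of ``tightness on $S^1$,'' but tightness (Prokhorov) gives weak-$*$ sequential compactness, and inferring $d_W$-compactness from that presupposes exactly the equivalence of topologies you are trying to prove. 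One can rescue it by showing $(\mathcal{P}(S^1), d_W)$ is complete and totally bounded directly (e.g.\ every measure is within $\epsilon$ of a finitely supported measure on a fixed $\epsilon$-net with weights from a finite grid), but that is essentially the same amount of work as your explicit coupling, so you may as well drop the alternative or justify the total boundedness. The main argument stands on its own.
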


\begin{fact} \label{fact: ultraproduct measure convergence}
Let $u_n$ be a sequence of unitaries in $M$, and let $u = [u_n]_{n \in \N}$ be the corresponding element in $M^{\cU}$.  Then $\mu_u = \lim_{n \to \cU} \mu_{u_n}$.
\end{fact}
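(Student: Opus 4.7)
The plan is to reduce the statement to a calculation of moments, using the characterization $\tau(u^k) = \int_{S^1} z^k\, d\mu_u(z)$ for $k \in \Z$ that was recalled just above the fact.

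First, since $S^1$ is compact metric, $\mathcal{P}(S^1)$ with the weak-$*$ topology is a compact Hausdorff space. In any compact Hausdorff space every ultrafilter converges, so the ultralimit $\nu := \lim_{n \to \cU} \mu_{u_n}$ exists in $\mathcal{P}(S^1)$. The task is then to identify $\nu$ with $\mu_u$.

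Next I would compute moments on both sides. Let $\tau_{\cU}$ be the tracial state on $M^{\cU}$, so that $\tau_{\cU}([x_n]_{n \in \N}) = \lim_{n \to \cU} \tau(x_n)$. Since $u = [u_n]_{n \in \N}$, we have $u^k = [u_n^k]_{n \in \N}$ for every $k \in \Z$ (using $u^{-1} = u^*$ and the fact that $[\cdot]$ respects the $*$-algebra operations), and therefore
\[
\int_{S^1} z^k\, d\mu_u(z) = \tau_{\cU}(u^k) = \lim_{n \to \cU} \tau(u_n^k) = \lim_{n \to \cU} \int_{S^1} z^k\, d\mu_{u_n}(z).
\]
On the other hand, since $z \mapsto z^k$ is continuous on $S^1$, the weak-$*$ convergence $\mu_{u_n} \to \nu$ along $\cU$ gives
\[
\lim_{n \to \cU} \int_{S^1} z^k\, d\mu_{u_n}(z) = \int_{S^1} z^k\, d\nu(z).
\]
Combining the two displays, $\mu_u$ and $\nu$ have the same moments for all $k \in \Z$. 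By Stone--Weierstrass (as invoked in the excerpt), a probability measure on $S^1$ is determined by its integrals against $\{z^k : k \in \Z\}$, so $\nu = \mu_u$, which is what we wanted.

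There is no real obstacle here — the argument is entirely standard, and the only mild point to be careful about is the existence of the weak-$*$ ultralimit, which is handled by compactness of $\mathcal{P}(S^1)$. An alternative formulation would avoid naming $\nu$ by directly checking the Portmanteau-type condition, but going through moments is the most transparent route given the setup in the paper.
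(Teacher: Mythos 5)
Your argument is correct and follows essentially the same route as the paper: compute the moments $\tau_{\cU}(u^k) = \lim_{n\to\cU}\tau(u_n^k)$ and invoke density of trigonometric polynomials in $C(S^1)$ to conclude weak-$*$ convergence. The only addition you make is explicitly naming the ultralimit $\nu$ and noting its existence via compactness of $\mathcal{P}(S^1)$, which is a small clarification the paper leaves implicit.
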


\begin{proof}
By construction of the multiplication, adjoint, and trace on the ultraproduct, for $k \in \N$,
\[
\lim_{n \to \cU} \int_{S^1} z^k \,d\mu_{u_n}(z) = \lim_{n \to \cU} \tau_M(u_n^k) = \tau_{M^{\cU}}(u) = \int_{S^1} z^k\,d\mu_u(z).
\]
Since polynomials span a dense subset of $C(S^1)$, we have $\mu_{u_n} \to \mu$ weak-$*$.
\end{proof}

Next, we recall the notion of \emph{free independence} (see e.g.\ \cite[Definition 2.5.1]{VDN1992}).  Let $(M,\tau)$ be a tracial von Neumann algebra.  Then von Neumann subalgebras $(M_i)_{i \in I}$ are said to be freely independent if for every $k \in \N$, for any $i_1 \neq i_2 \neq \dots \neq i_k$, for any $x_1 \in M_{i_1}$, \dots, $x_k \in M_{i_k}$, we have
\[
\tau[(x_1 - \tau(x_1)) \dots (x_k - \tau(x_k))] = 0.
\]
With free independence, the value of $\tau$ on any product of elements from the $M_i$'s is uniquely determined by the $\tau|_{M_i}$'s \cite[Proposition 2.5.5(1)]{VDN1992}.  Moreover, the free product construction allows realization of freely independent copies of any given tracial von Neumann algebras \cite[Proposition 1.5.5]{VDN1992}.  Two elements or tuples from $M$ are said to be freely independent if the von Neumann subalgebras that they generate are freely independent.

If $u$ and $v$ are freely independent unitaries, then the measure $\mu_{uv}$ is uniquely determined by $\mu_u$ and $\mu_v$ in light of \cite[Proposition 2.5.5(1)]{VDN1992}.  The measure $\mu_{uv}$ is called the \emph{free convolution} of $\mu_u$ and $\mu_v$ and denoted $\mu_u \boxtimes \mu_v$.  Note that for any two measures $\mu$ and $\nu$ in $\mathcal{P}(S^1)$, there exist some freely independent unitaries with spectral distributions $\mu$ and $\nu$, and hence $\mu \boxtimes \nu$ is well-defined.

\begin{fact} \label{fact: free convolution Haar}
If either $\mu$ or $\nu$ is $\mu_{\Haar}$, then $\mu \boxtimes \nu = \mu_{\Haar}$.
\end{fact}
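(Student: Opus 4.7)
The plan is to exploit rotational invariance of the Haar measure on $S^1$. First I would reduce to the case $\nu = \mu_{\Haar}$, using commutativity of free convolution (which itself follows from cyclicity of the trace: for freely independent unitaries $u,v$ and $k \in \Z$, $\tau((uv)^k) = \tau((vu)^k)$, so $\mu_{uv} = \mu_{vu}$, giving $\mu \boxtimes \nu = \nu \boxtimes \mu$).

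Next I would pick freely independent unitaries $u, v$ in some tracial von Neumann algebra with $\mu_u = \mu$ and $\mu_v = \mu_{\Haar}$. For any $\theta \in \R$, the unitary $e^{i\theta}v$ still has Haar distribution (since $\mu_{\Haar}$ is rotation-invariant) and generates the same von Neumann subalgebra as $v$, so $u$ and $e^{i\theta}v$ remain freely independent with the same individual distributions as $u$ and $v$. Because the joint $*$-distribution of a freely independent pair is determined by the marginals (\cite[Proposition 2.5.5(1)]{VDN1992}, cited in the excerpt), $(u, e^{i\theta}v)$ has the same joint distribution as $(u, v)$. In particular the products agree in distribution: $\mu_{e^{i\theta}uv} = \mu_{u(e^{i\theta}v)} = \mu_{uv}$.

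Thus $\mu_{uv}$ is invariant under every rotation of $S^1$, and since $\mu_{\Haar}$ is the unique rotation-invariant Borel probability measure on $S^1$, we conclude $\mu \boxtimes \mu_{\Haar} = \mu_{uv} = \mu_{\Haar}$.

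I do not anticipate a genuine obstacle: the argument reduces everything to the principle that free independence, like classical independence, determines joint moments from marginal moments, combined with the rotational symmetry of Haar measure. The only small point requiring justification is the commutativity reduction, handled above via trace cyclicity; alternatively one could avoid it entirely by running the same rotation-invariance argument symmetrically, rotating whichever of $u$ or $v$ happens to be the Haar unitary.
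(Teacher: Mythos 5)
Your argument is correct and is essentially the same as the paper's: rotate the Haar-distributed factor, invoke the fact that free independence determines the joint distribution from the marginals to conclude the product's distribution is rotation-invariant, and identify it with $\mu_{\Haar}$ by uniqueness. The paper skips the commutativity reduction and simply treats the two cases by symmetry, exactly the alternative you note at the end.
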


\begin{proof}
Let $u$ and $v$ be freely independent with $\mu_u = \mu_{\Haar}$.  For $z \in S^1$, note that $zu$ is also freely independent of $v$ and has $\mu_{zu} = \mu_{\Haar}$.  Therefore, by free independence, $(zu)v$ and $uv$ have the same distribution, so $\mu_{uv}$ is rotation-invariant and thus equals $\mu_{\Haar}$.  The case when $\mu_v = \mu_{\Haar}$ is symmetrical.
\end{proof}

\begin{fact} \label{fact: free convolution Wasserstein}
Let $\mu_1$, $\mu_2$, $\nu \in \mathcal{P}(S^1)$.  Then $d_W(\mu_1 \boxtimes \nu, \mu_2 \boxtimes \nu) \leq d_W(\mu_1,\mu_2)$.
\end{fact}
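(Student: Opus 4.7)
The plan is to witness the Wasserstein distance on the left by multiplying an optimal coupling for $\mu_1$ and $\mu_2$ on the right by a freely independent unitary of distribution $\nu$, then use tracialness of the trace to see that right multiplication by a unitary is isometric in the $2$-norm.

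More precisely, fix $\epsilon > 0$. I first invoke the definition of $d_W$ to pick a separable tracial von Neumann algebra $(M,\tau)$ and unitaries $u_1,u_2 \in M$ with $\mu_{u_i} = \mu_i$ and
\[
\norm{u_1 - u_2}_2 < d_W(\mu_1,\mu_2) + \epsilon.
\]
Next, I choose a separable tracial von Neumann algebra $(N,\sigma)$ containing a unitary $v$ with $\mu_v = \nu$ (for instance $N = L^\infty(S^1,\nu)$ with $v$ the identity function). I then form the tracial free product $(\widetilde{M},\widetilde{\tau}) = (M,\tau) * (N,\sigma)$, inside which $v$ is freely independent from $M$, and in particular freely independent from each $u_i$ separately. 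By definition of free convolution, $\mu_{u_i v} = \mu_i \boxtimes \nu$ for $i = 1,2$.

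The key computation is then
\[
\norm{u_1 v - u_2 v}_2^2 = \widetilde{\tau}\bigl(v^*(u_1-u_2)^*(u_1-u_2)v\bigr) = \widetilde{\tau}\bigl((u_1-u_2)^*(u_1-u_2)vv^*\bigr) = \norm{u_1-u_2}_2^2,
\]
using that $\widetilde{\tau}$ is a trace and $vv^* = 1$.  Combining with the estimate above yields
\[
d_W(\mu_1 \boxtimes \nu, \mu_2 \boxtimes \nu) \leq \norm{u_1 v - u_2 v}_2 = \norm{u_1 - u_2}_2 < d_W(\mu_1,\mu_2) + \epsilon,
\]
and letting $\epsilon \to 0$ gives the desired inequality.

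There is really no obstacle here beyond ensuring the free product construction delivers $v$ freely independent from $u_1$ and $u_2$ simultaneously (which is automatic since $v$ is freely independent from all of $M$), and observing that right multiplication by a unitary preserves the $2$-norm in any tracial setting.
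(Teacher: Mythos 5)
Your proof is correct and follows essentially the same route as the paper's: realize $\mu_1,\mu_2$ by unitaries nearly attaining $d_W(\mu_1,\mu_2)$, adjoin a freely independent unitary $v$ with law $\nu$ via a free product, and use that right multiplication by $v$ is a $\norm{\cdot}_2$-isometry. You are merely more explicit about the $\epsilon$-approximation and the free product construction, but the underlying idea is identical.
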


\begin{proof}
Let $u_1$ and $u_2$ be unitaries in some tracial von Neumann algebra $(M,\tau_M)$ with the spectral measures $\mu_1$ and $\mu_2$ respectively.  Let $(N,\tau_N)$ be a larger tracial von Neumann algebra containing a unitary $v$ freely independent from $M$ with $\mu_v = \nu$.  Then $\mu_{u_1v} = \mu_1 \boxtimes \nu$ and $\mu_{u_2v} = \mu_1 \boxtimes \nu$ and $\norm{u_1v - u_2v}_2 = \norm{u_1 - u_2}_2$.  Since the initial choice of $M$ and $u_1$ and $u_2$ with spectral measures $\mu_1$ and $\mu_2$ was arbitrary, the claim follows.
\end{proof}

Finally, we recall Popa's remarkable theorem on approximately free elements in $\mathrm{II}_1$ factors.  This is the most significant ingredient that makes our proof of Theorem \ref{thm: contractible} possible.

\begin{thm}[Popa {\cite{Popa1995free}}] \label{thm: Popa freeness}
Let $M$ be a separable $\mathrm{II}_1$ factor and $\cU$ a free ultrafilter on $\N$.  View $M \subseteq M^{\cU}$ via the diagonal embedding.  Then there exists a unitary $v \in M^{\cU}$ freely independent of $M$, with $\mu_v = \mu_{\Haar}$.
\end{thm}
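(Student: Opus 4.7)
My plan is to reduce the statement to an approximate intrinsic version inside $M$ via a standard reindexing argument, and then to invoke Popa's construction of approximately free Haar unitaries. Fix an increasing sequence $F_n$ of finite subsets of the closed unit ball of $M$ whose union is SOT-dense. By Fact~\ref{fact: ultraproduct measure convergence} and the characterization of free independence in terms of vanishing alternating mixed moments, the theorem reduces to constructing, for each $n$, a unitary $v_n \in U(M)$ satisfying $|\tau(v_n^k)| < 1/n$ for $0 < |k| \leq n$, and $|\tau(P)| < 1/n$ for every alternating product $P$ of length at most $n$ whose factors alternate between trace-zero elements of norm $\leq 1$ in the $*$-algebra generated by $F_n$ and powers $v_n^k$ with $0 < |k| \leq n$. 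Setting $v := [v_n]_{n \to \cU}$ then yields a Haar unitary in $M^{\cU}$ freely independent of $M$.

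The main obstacle is the construction of such $v_n$ inside $M$ itself, without relying on the relative commutant $M' \cap M^{\cU}$, which can be trivial when $M$ does not have property $\Gamma$. My plan follows Popa. I would begin with a Haar unitary $w \in A$ for a diffuse abelian subalgebra $A \subset M$, obtained from the spectral theorem applied to any self-adjoint element with diffuse spectrum; its spectral distribution is already Haar, so only approximate freeness requires arrangement. To achieve it, one ``scrambles'' $w$ by unitary conjugation in $M$: the Dixmier property of a $\mathrm{II}_1$ factor guarantees that for any $x \in M$ and $\epsilon > 0$ there exist unitaries $u_1, \ldots, u_K \in M$ with $\norm{(1/K) \sum_j u_j x u_j^* - \tau(x) \cdot 1}_2 < \epsilon$. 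Using such averaging — or, more precisely, iterated conjugations by unitaries chosen to decorrelate $w$ from the elements of $F_n$ — one builds $v_n$ as a product of conjugates of $w$ whose cross-moments with $F_n$ decay while the marginal spectral distribution is preserved.

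The hard part, which is the main technical content of \cite{Popa1995free}, is to arrange the construction so that \emph{all} alternating mixed moments up to order $n$ vanish simultaneously, and to verify the resulting freeness estimates. I expect this to proceed by induction on the length of the mixed moment: at each stage one introduces a finer layer of scrambling that kills the next order of moments while preserving the control established at lower orders. The estimates combine telescoping arguments with the standard bound $\norm{xy}_2 \leq \norm{x}_\infty \norm{y}_2$ in the tracial setting, and I would cite Popa's detailed calculations rather than reproduce them here.
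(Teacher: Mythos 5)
The paper does not contain a proof of this statement: it is Popa's theorem, cited directly from \cite{Popa1995free} and used as a black box. So there is no ``paper's own proof'' to compare against, and in that sense your stance of deferring the hard construction to Popa's article matches the paper exactly.

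Your first paragraph --- the reduction from the ultrapower statement to an approximate intrinsic statement in $M$ --- is correct and standard. Since $M$ is $\norm{\cdot}_2$-separable, it suffices to verify approximate vanishing of alternating mixed moments against an increasing exhaustion $F_n$ of a dense subset of the unit ball, and moments are jointly $\norm{\cdot}_2$-continuous on bounded sets, so freeness passes to the limit $v = [v_n]_{n \to \cU}$. One small imprecision: in your freeness conditions the factors $v_n^k$ ($k \neq 0$) should really be the centered elements $v_n^k - \tau(v_n^k)$; this is harmless because your first condition forces $\tau(v_n^k)$ to be small, so the two formulations agree up to a telescoping error, but the statement of freeness requires the centering. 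You should also allow the alternating word to begin and end with a factor of either type.

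Your second and third paragraphs correctly identify the crux --- that one cannot rely on the relative commutant $M' \cap M^{\cU}$, which may be trivial for non-$\Gamma$ factors --- and offer Dixmier averaging as a heuristic for how ``scrambling'' should decorrelate a fixed Haar unitary from a given finite set. This is a reasonable intuition, but it is not a proof: Dixmier averaging controls one conjugated element at a time in $\norm{\cdot}_2$, and turning that into uniform control of all alternating mixed moments up to a given length, while preserving the Haar marginal, is precisely the delicate incremental construction that occupies Popa's paper. You are explicit that you would cite Popa's calculations rather than reproduce them, and since the paper at hand does exactly the same thing, this is acceptable for present purposes --- just be aware that the Dixmier picture alone would not carry the argument.
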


\section{Proof of the main theorem}

The first step of the proof is to homotope the unitaries so that their spectral measures converge to the Haar measure $\mu_{\Haar}$ as $t \to \infty$.

\begin{lem} \label{lem: Haar homotopy}
Let $M$ be a separable $\mathrm{II}_1$ factor.  Then there exists a map $h: [0,\infty) \times U(M) \to U(M)$ with the following properties.
\begin{enumerate}[(1)]
	\item $\norm{h(t,u) - h(s,v)}_2 \leq \pi|t-s| + \norm{u-v}_2$.
	\item For each $u \in U(M)$, we have $\lim_{t \to \infty} d_W(\mu_{h(t,u)},\mu_{\Haar}) = 0$.
\end{enumerate}
\end{lem}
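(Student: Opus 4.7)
My plan is to set $h(t,u) = u \cdot w(t)$ where $w\colon[0,\infty) \to U(M)$ is a single $\pi$-Lipschitz path independent of $u$, built by concatenating approximately freely independent Haar unitaries produced by Popa's theorem.  First, I fix an SOT-dense sequence $(u_j)_{j\in\N}$ in $U(M)$ and construct $v_n \in U(M)$ inductively: at stage $n$, Theorem \ref{thm: Popa freeness} supplies a Haar unitary $v \in M^{\cU}$ freely independent of $M$; I pick a lift of $v$ to a sequence in $U(M)$, and using Fact \ref{fact: ultraproduct measure convergence} (applied both to spectral measures and to joint moments) I select one term to be $v_n$ in such a way that $v_n$ is $\varepsilon_n$-freely independent of $\{u_1,\ldots,u_n,v_1,\ldots,v_{n-1}\}$ in all mixed moments of length $\leq L_n$, with $\varepsilon_n \to 0$ and $L_n \to \infty$ chosen generously (say $\varepsilon_n \leq 2^{-n}$ and $L_n = n^2$).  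The construction also arranges $d_W(\mu_{v_n},\mu_{\Haar}) \to 0$.

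Setting $A_n := -i\log v_n$ with the principal branch gives $A_n = A_n^*$, $\|A_n\|_\infty \leq \pi$, and $v_n = e^{iA_n}$.  With $P_0 := 1$ and $P_n := v_n v_{n-1}\cdots v_1$, the path is
\[
w(t) := e^{i(t-(n-1))A_n}\, P_{n-1}, \qquad h(t,u) := u\,w(t), \qquad t \in [n-1,n].
\]
Continuity at each integer holds because $w(n) = v_n P_{n-1} = P_n$ from either side.  Since functional calculus and $\|A_n\|_\infty \leq \pi$ yield $\|e^{i(t-s)A_n}-1\|_\infty \leq \pi|t-s|$, and right multiplication by a unitary is $\|\cdot\|_2$-isometric, one has $\|w(t)-w(s)\|_2 \leq \pi|t-s|$ on each interval; the triangle inequality extends this across intervals.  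Property (1) now follows from $\|h(t,u)-h(s,v)\|_2 \leq \|w(t)-w(s)\|_2 + \|u-v\|_2$.

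For property (2), traciality gives $\mu_{h(t,u)} = \mu_{e^{isA_n}\cdot P_{n-1}u}$ for $t = (n-1)+s \in [n-1,n]$.  In the exact idealization where $v_1,\ldots,v_n$ are jointly free Haar unitaries and each is free from $u$, an induction using Fact \ref{fact: free convolution Haar} shows that $P_{n-1}$ is a Haar unitary freely independent of $u$, so $P_{n-1}u$ is itself Haar; since $W^*(v_n) \ni e^{isA_n}$ is free from $W^*(u,v_1,\ldots,v_{n-1}) \ni P_{n-1}u$, one more application of Fact \ref{fact: free convolution Haar} yields $\mu_{e^{isA_n}\cdot P_{n-1}u} = \mu_{\Haar}$ for \emph{every} $s \in [0,1]$.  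The actual $v_n$'s only satisfy approximate joint freeness, but Fact \ref{fact: free convolution Wasserstein} transports approximations through free convolutions, and together with Facts \ref{fact: topologies on measures} and \ref{fact: ultraproduct measure convergence} this allows an ultrapower argument by contradiction: any failure of (2) along a sequence $(n_k,s_k)$ with $s_k \to s_*$ would, after passing to $M^{\cU}$, produce an exact violation of the identity just derived, which is impossible.  For a general $u \in U(M)$, the estimate $d_W(\mu_{h(t,u)},\mu_{h(t,u_j)}) \leq \|u-u_j\|_2$ together with the density of $(u_j)$ finishes the proof.

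The main obstacle is the last step, turning exact freeness into effective approximate freeness: the $k$-th moment of $h((n-1)+s,u)$ is a word of length $O(kn)$ in $u$ and $v_1,\ldots,v_n$, so the approximation parameters in the construction of the $v_n$ must dominate arbitrary word lengths as $n\to\infty$.  The ultrapower formulation is the cleanest way to organize this, since it collapses the quantitative bookkeeping into a single exact freeness statement in $M^{\cU}$ involving the limit objects $V_* = [v_{n_k}]$, $P_* = [P_{n_k-1}]$, and $A_* = -i\log V_*$.
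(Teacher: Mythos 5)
Your construction is sound and ultimately rests on the same two pillars as the paper's proof: Popa's theorem supplies Haar unitaries in $M^{\cU}$ free from $M$, and one builds a single $\pi$-Lipschitz path $w(t)$ in $U(M)$ (independent of $u$) by lifting pieces of the ultrapower Haar unitary; then $h(t,u)=u\,w(t)$ gives property~(1) immediately. The real difference is how you verify property~(2), and there the two arguments diverge in a way worth noting.

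The paper never needs \emph{joint} approximate freeness of the lifts. At stage $m$ it already knows $h(m,u_j)\in M$ for each $j\le m$, and it uses the \emph{exact} freeness (in $M^{\cU}$) of the fixed Popa unitary $v=e^{\pi i x}$ from $M$ to conclude $\mu_{h(m,u_j)e^{\pi i s x}}=\mu_{\Haar}$; it then chooses the lift index $n(m)$ purely so that a finite family of Wasserstein distances $d_W(\mu_{h(m,u_j)e^{\pi i s x_{n(m)}}},\mu_{h(m,u_j)e^{\pi i s x}})$ is small, uniformly in $s$, which is possible because $d_W$ is $L^2$-Lipschitz and the map $s\mapsto d_W(\cdot)$ is equi-Lipschitz. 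This gives an explicit rate $\sup_{t\ge m}d_W(\mu_{h(t,u_j)},\mu_{\Haar})\le 3\cdot 2^{-m}$ with no bookkeeping of word lengths.

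Your version instead imposes $\varepsilon_n$-freeness of $v_n$ from $\{u_1,\dots,u_n,v_1,\dots,v_{n-1}\}$ up to word length $L_n$, and argues by contradiction via a second ultrapower in which $V_*=[v_{n_k}]$, $P_*=[P_{n_k-1}]$ become exactly free from the relevant data. This does work: the key points are (i) $P_{n_k-1}u_j$ is a word of length $O(n_k)$ that falls below $L_{n_k}=n_k^2$ eventually, so $\mu_{[P_{n_k-1}u_j]_k}=\mu_{\Haar}$ because the last factor $v_{n_k-1}$ is approximately free and approximately Haar (only the last layer matters, by Fact~\ref{fact: free convolution Haar}); (ii) $V_*$ is free from $W^*(u_j,P_*)$ by the same word-length accounting; and (iii) $[A_{n_k}]_k=-i\log V_*$ because $V_*$ is Haar and therefore has no atom at $-1$, so the self-adjoint logarithm with spectrum in $[-\pi,\pi]$ is unique. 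You correctly flag the word-length bookkeeping as the main obstacle; the paper's route avoids it entirely by exploiting that the accumulated product $h(m,u_j)$ is a single concrete element of $M$ at the moment the next lift is chosen, so only the exact freeness of the fixed $v$ from $M$ (and a one-step Wasserstein estimate via Facts~\ref{fact: free convolution Haar} and~\ref{fact: free convolution Wasserstein}) is ever invoked. Both approaches are correct; the paper's is quantitatively cleaner, yours trades that for the conceptual clarity of an ultrapower compactness argument but requires care with the $(\varepsilon_n,L_n)$ parameters and with taking logarithms in $M$ rather than lifting $x$ itself.

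Two small points to tidy up if you write this out in full. First, Fact~\ref{fact: ultraproduct measure convergence} as stated concerns the spectral measure of a single unitary; when you invoke it ``applied to joint moments'' you are really using the defining property of the ultraproduct trace, which is fine but should be stated. Second, taking $A_n=-i\log v_n$ inside $M$ is harmless, but you should note that the exponential path $t\mapsto e^{itA_n}$ is uniformly norm-continuous on $\{\|A\|\le\pi\}$ so that $[e^{is_kA_{n_k}}]_k=e^{is_*[A_{n_k}]_k}$; the paper sidesteps the branch-cut subtlety by taking the logarithm once in $M^{\cU}$ and lifting the self-adjoint $x$ rather than the unitary $v$.
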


\begin{proof}
Consider $M \subseteq M^{\cU}$ and let $v$ be a freely independent Haar unitary given by Theorem \ref{thm: Popa freeness}.  Write $v = e^{\pi i x}$ where $x \in M^{\cU}$ is self-adjoint with $\norm{x} \leq 1$.  Let $x = [x_n]_{n \in \N}$ with $x_n$ self-adjoint and $\norm{x_n} \leq 1$.

Let $(u_m)_{m \in \N}$ enumerate a dense subset of $U(M)$.  We will define $h(t,u)$ for $t \in [m,m+1]$ by induction on $m$.  In fact to define it on $[m,m+1]$, we only need to know the value at $t = m$, and so the base case where $m = 0$ proceeds in the same way as the induction step.  So let $m \geq 0$ and assume $h$ is defined on $[0,m] \times U(M)$ with $h(0,u) = u$.

For $j = 1$, \dots, $m$, and $s \in [0,1]$, we have that $h(m,u_j)$ is free from $e^{\pi i s x}$ in $M^{\cU}$.  By Facts \ref{fact: topologies on measures} and \ref{fact: ultraproduct measure convergence}, we have
\[
\forall s \in [0,1], \quad \lim_{n \to \cU} d_W(\mu_{h(m,u_j) e^{\pi i sx_n}},\mu_{h(m,u_j) e^{\pi i sx}}) = 0.
\]
Furthermore,
\[
s \mapsto d_W(\mu_{h(m,u_j) e^{\pi i sx}},\mu_{h(m,u_j) e^{\pi i sx}}) \text{ is $2\pi$-Lipschitz}.
\]
Therefore, since equicontinuity and pointwise convergence imply uniform convergence,
\[
\lim_{n \to \cU} d_W(\mu_{h(m,u_j) e^{\pi i sx_n}},\mu_{h(m,u_j) e^{\pi i sx}}) = 0 \text{ uniformly for } s \in [0,1].
\]
Hence, choose $n(m)$ such that
\[
\max_{j \leq m} \sup_{s \in [0,1]} d_W(\mu_{h(m,u_j) e^{\pi i sx_{n(m)}}},\mu_{h(m,u_j) e^{\pi i sx}}) \leq \frac{1}{2^m}.
\]
We then define
\[
h(m+s,u) = h(m,u) e^{\pi i sx_{n(m)}}\text{ for } s \in [0,1].
\]
This completes the inductive definition.

To check claim (1), note that by construction $s \mapsto h(m+s,u)$ is $\pi$-Lipschitz for $s \in [0,1]$ and for $m \in \N$, which implies it is Lipschitz in the time variable on all of $[0,\infty)$.  The Lipschitzness in $u$ follows because by construction $h(t,u) = uv_t$ for some unitary $v_t$ independent of $u$.

Now we prove (2).  By Facts \ref{fact: free convolution Haar} and \ref{fact: free convolution Wasserstein}, we have
\[
\mu_{h(m,u_j) e^{\pi i sx}} = \mu_{h(m,u_j)} \boxtimes \mu_{\Haar} = \mu_{\Haar},
\]
and hence by our choice of $x_{n(m)}$, we have
\[
\forall j \leq m, \quad d_W(\mu_{h(m+1,u_j)}, \mu_{\Haar}) = d_W(\mu_{h(m,u_j)e^{\pi i x_{n(m)}}}, \mu_{h(m,u_j)e^{\pi i x}}) \leq \frac{1}{2^m}.
\]
Furthermore, for $s \in [0,1]$, 
\[
d_W(\mu_{h(m,u_j) e^{\pi i sx}}, \mu_{\Haar}) \leq d_W(\mu_{h(m,u_j)},\mu_{\Haar}),
\]
and hence
\[
d_W(\mu_{h(m+s,u_j)},\mu_{\Haar}) = d_W(\mu_{h(m,u_j) e^{\pi i sx_{n(m)}}}, \mu_{\Haar}) \leq \frac{1}{2^m} + d_W(\mu_{h(m,u_j) e^{\pi i sx}}, \mu_{\Haar}).
\]
Putting these estimates together,
\[
\forall m \geq j, \sup_{s \in [0,1]} d_W(\mu_{h(m+1+s,u_j)},\mu_{\Haar}) \leq \frac{1}{2^m} + \frac{1}{2^{m+1}}.
\]
In particular,
\[
\forall m \geq j+1, \quad \sup_{t \geq m} d_W(\mu_{h(t,u_j)},\mu_{\Haar}) \leq \frac{3}{2^m},
\]
which proves claim (2) in the case when $u$ is one of the elements $u_j$.  Because $(u_j)_{j \in \N}$ is dense in $U(M)$ in the $2$-norm, and because $h(t,u)$ is $1$-Lipschitz in $u$, the claim extends to all of $U(M)$.
\end{proof}

For the second step of the proof, after the unitaries are homotoped to have spectral measures sufficiently close to $\mu_{\Haar}$, we apply a deformation by functional calculus to push the mass of the measure towards the point $1$ on the circle.

\begin{lem} \label{lem: functional deformation}
Let $f: [0,\infty) \times \R/\Z \to \R/\Z$ be given by $f(t,x) = f_t(x) = x|2x|^t$ for $x \in (-1/2,1/2]$.  Since the values agree at $\pm 1/2$, $f_t$ is well-defined and continuous on $\R/\Z$.  Hence, let $g_t(z) = g(t,z)$ be the function $[0,\infty) \times S^1 \to S^1$ given by $g_t(e^{2\pi i x}) = e^{2\pi i f_t(x)}$.  The for any unitary $u$ in a tracial von Neumann algebra,
\[
\norm{g_t(u) - 1}_2 \leq (2t+1)d_W(\mu_u, \mu_{\Haar}) + \sqrt{\frac{\pi}{2(t+1)}}.
\]
\end{lem}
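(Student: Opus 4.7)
The strategy is a triangle inequality against a Haar unitary $v$ realized in a common tracial algebra with $u$: using that the free Wasserstein distance for a single unitary is witnessed by commutative realizations, for any $\epsilon > 0$ we can choose $(u, v)$ with $\norm{u - v}_2 \leq d_W(\mu_u, \mu_{\Haar}) + \epsilon$ and then write
\[
\norm{g_t(u) - 1}_2 \leq \norm{g_t(u) - g_t(v)}_2 + \norm{g_t(v) - 1}_2.
\]
The first term becomes the $(2t+1) d_W$ contribution via a Lipschitz estimate on $g_t$; the second is a fixed integral against Haar measure. Sending $\epsilon \to 0$ at the end produces the stated inequality.

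For the Lipschitz estimate, one checks directly that $f_t(x) = 2^t \operatorname{sgn}(x)|x|^{t+1}$ has derivative $f_t'(x) = 2^t(t+1)|x|^t$, bounded by $t+1$ on $(-1/2, 1/2)$, so $f_t$ is $(t+1)$-Lipschitz as a map $\R/\Z \to \R/\Z$ in the quotient (arc-length) metric. Using $|e^{2\pi i a} - e^{2\pi i b}| = 2|\sin(\pi(a - b))|$ together with the elementary upper bound $|\sin \theta| \leq |\theta|$ and Jordan's lower bound $|\sin \theta| \geq 2|\theta|/\pi$ for $|\theta| \leq \pi/2$, one converts $f_t$'s arc-length Lipschitz constant into a chord-metric Lipschitz estimate on $S^1$ of order $t$; the sharp constant $2t+1$ should follow from careful casework separating the regime of near-antipodal pairs from that of close pairs. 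Applying this pointwise bound in $L^2$ yields $\norm{g_t(u) - g_t(v)}_2 \leq (2t+1) \norm{u - v}_2$.

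For the Haar bound, parameterize $\mu_{\Haar}$ by $x \in (-1/2, 1/2]$ uniformly. By the oddness of $f_t$ and the substitution $s = (2x)^{t+1}$,
\[
\norm{g_t(v) - 1}_2^2 = 8 \int_0^{1/2} \sin^2(\pi \cdot 2^t x^{t+1})\,dx = \frac{4}{t+1}\int_0^1 \sin^2(\pi s/2)\, s^{1/(t+1) - 1}\, ds.
\]
Writing $\sin^2(\pi s/2) = (1 - \cos(\pi s))/2$ and integrating by parts in the cosine integral (the boundary term at $s = 0$ vanishes for $\alpha = 1/(t+1) > 0$) reduces this to $4 - 2\pi \int_0^1 \sin(\pi s)\, s^{1/(t+1)}\, ds$. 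As $t \to \infty$, the residual integral tends to $2/\pi$ at rate $O(1/(t+1))$, so the whole quantity decays like $1/(t+1)$; a careful expansion of $\int_0^1 \sin(\pi s)\, s^\alpha\, ds$ around $\alpha = 0$ should pin down the prefactor $\pi/(2(t+1))$.

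The main obstacle I anticipate is matching the sharp constants $2t+1$ and $\pi/(2(t+1))$ rather than merely the correct orders of magnitude ($t$ for Lipschitz, $1/t$ for the Haar integral), which come out of the computations above with almost no effort. Pinning down these prefactors requires careful bookkeeping with Jordan's inequality, the conversion between arc-length and chord metrics on $S^1$, and the residual integral after integration by parts.
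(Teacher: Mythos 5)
Your approach is the same as the paper's: split $\norm{g_t(u)-1}_2$ via a coupling (or equivalently the triangle inequality for $d_W$) into a Lipschitz term $(g_t$ applied to $u$ vs.\ a Haar unitary) and a fixed Haar integral $\norm{g_t(v)-1}_2$ for $v$ Haar, then bound each separately. Two comments on the obstacles you flag.

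First, the arc-to-chord conversion you worry about loses nothing, so you should not expect a factor of $\pi/2$ or the like. Since $g_t$ is an orientation-preserving homeomorphism of $S^1$ fixing $\pm 1$ with arc-length derivative $\leq L := t+1$, the image of the shorter arc between $z_1,z_2$ has length $\theta' \leq \min(L\theta,\pi)$ where $\theta$ is the original arc distance. For $L\theta \leq \pi$, concavity of $\sin$ on $[0,\pi]$ gives $\sin(\theta'/2) \leq \sin(L\theta/2) \leq L\sin(\theta/2)$; for $L\theta > \pi$, $\sin(\theta'/2)\leq 1 \leq L\sin(\pi/(2L)) \leq L\sin(\theta/2)$ by Jordan's inequality. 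Hence $|g_t(z_1)-g_t(z_2)| \leq (t+1)|z_1-z_2|$ exactly (the paper's $2t+1$ is an overestimate, as is its claim $|f_t'|\leq 2t+1$).

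Second, your exact evaluation of the Haar term is in fact stronger evidence than the paper's own sketch, and it reveals that the stated constant $\sqrt{\pi/(2(t+1))}$ cannot be reproduced by either computation: your change of variables gives $\norm{g_t(v)-1}_2^2 = \frac{4}{t+1}\int_0^1 \sin^2(\pi s/2)\,s^{1/(t+1)-1}\,ds$, whose limit as $t\to\infty$ is $\frac{4}{t+1}\int_0^1 \frac{\sin^2(\pi s/2)}{s}\,ds = \frac{2\,\mathrm{Cin}(\pi)}{t+1} \approx \frac{3.30}{t+1}$, which exceeds $\frac{\pi}{2(t+1)} \approx \frac{1.57}{t+1}$; and at $t=0$ one has $\norm{v-1}_2 = \sqrt{2} > \sqrt{\pi/2}$ directly. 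The simple upper bound $\sin^2(\pi f_t) \leq \pi^2 f_t^2$ yields the valid estimate $\norm{g_t(v)-1}_2 \leq \pi/\sqrt{2t+3}$, which is of the same order. Since the lemma is used downstream only for its $O(t^{-1/2})$ decay, the specific prefactor is immaterial, and you should not spend effort trying to extract $\sqrt{\pi/(2(t+1))}$ from a correct computation: just state and prove a bound of the form $C_1(t+1)\,d_W(\mu_u,\mu_{\Haar}) + C_2(t+1)^{-1/2}$, which is what your argument already delivers.
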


\begin{proof}
Remark that
\begin{multline*}
\int_{S^1} |g_t(z) - 1|^2\,d\mu_{\Haar} \leq 2\pi \int_{-1/2}^{1/2} |f_t(x)|^2\,dx = 2^{2t+1} \pi \int_{-1/2}^{1/2} |x|^{2t + 1} \,dx \\ = 2^{2t+1}\pi \cdot \frac{1}{2t+2} 2(1/2)^{2t+2} = \frac{\pi}{2(t+1)}.
\end{multline*}
Hence,
\[
d_W((g_t)_* \mu_{\Haar}, \delta_1) \leq \sqrt{\frac{\pi}{2(t+1)}}.
\]
Meanwhile, note that $|f_t'| \leq 2t+1$-Lipschitz on $[-1/2,1/2]$, and hence also the derivative of $g_t$ along the circle is bounded by $2t+1$, and it is $2t+1$-Lipschitz.  Since the Wasserstein distance can be witnessed in classical probability spaces, it follows that
\[
d_W((g_t)_* \mu_u, (g_t)_* \mu_{\Haar}) \leq (2t+1) d_W(\mu_u, \mu_{\Haar}).
\]
Hence, by the triangle inequality,
\[
d_W((g_t)_* \mu_u, \delta_1) \leq (2t+1)d_W(\mu_u, \mu_{\Haar}) + \sqrt{\frac{\pi}{2(t+1)}}.
\]
Note that the $L^2$-distance of a unitary $v$ from $1$ is equal to the Wasserstein distance of $\mu_v$ from $\delta_1$.
\end{proof}

\begin{proof}[Proof of Theorem \ref{thm: contractible}]
Let $h(t,u)$ be as in Lemma \ref{lem: Haar homotopy}.  Then $g_t$ be as in Lemma \ref{lem: functional deformation}.  Let
\[
s(t,u) = \min(d_W(\mu_u,\mu_{\Haar})^{-1/2},t),
\]
which is jointly continuous in $(t,u)$.  For $t \in [0,\infty)$ and $u \in U(M)$, let
\[
\tilde{h}(t,u) = g_{s(t,h(t,u)))}(h(t,u)),
\]
which is jointly continuous in $(t,u)$ by construction.  By Lemma \ref{lem: functional deformation}, we have
\begin{align*}
\norm{\tilde{h}(t,u) - 1}_2 &\leq (2s(t,u) + 1) d_W(\mu_{h(t,u)},\mu_{\Haar}) + \sqrt{\frac{\pi}{2s(t,u) + 1}} \\
&\leq (2d_W(\mu_{h(t,u)},\mu_{\Haar})^{-1/2} + 1) d_W(\mu_{h(t,u)},\mu_{\Haar}) \\
&\quad + \max\left( \sqrt{\frac{\pi}{2(d_W(\mu_{h(t,u)},\mu_{\Haar})^{-1/2} + 1)}}, \sqrt{\frac{\pi}{2(t+1)}} \right) \\
&\leq C_1 d_W(\mu_{h(t,u)},\mu_{\Haar})^{1/4} + C_2t^{-1/2}
\end{align*}
for universal constants $C_1$ and $C_2$ (note that the Wasserstein distance on $S^1$ is always bounded by the diameter $2$).  This shows that $\tilde{h}(t,u)$ is continuous and satisfies
\[
\lim_{t \to \infty} \norm{\tilde{h}(t,u) - 1}_2 = 0.
\]
Moreover, because function $d_W(\mu_{h(t,u)},\mu_{\Haar})$, used in the upper bound, is $1$-Lipschitz in $u$, we see that
\begin{multline*}
\forall u \in U(M), \forall \epsilon > 0, \exists \delta > 0, \exists T > 0 \text{ such that} \\ \forall t, \forall v \in U(M), \norm{u - v}_2 < \delta \text{ and } t \geq T \implies \norm{\tilde{h}(t,u) - 1}_2 < \epsilon.
\end{multline*}
In other words, if we set $\tilde{h}(\infty,u) = 1$, then $\tilde{h}$ is continuous on $[0,\infty] \times U(M)$.  This produces the desired homotopy from the identity on $U(M)$ to the constant function $1$.
\end{proof}

\subsection*{Declarations}

The author states that there is no conflict of interest.  The manuscript has no associated data.

\bibliographystyle{plain}
\bibliography{contractible-unitary-references}

\end{document}